 \newtheorem{thm}{Theorem}[section]
 \newtheorem{lem}[thm]{Lemma}
 \newtheorem{prop}[thm]{Proposition}
 \theoremstyle{definition}
 \theoremstyle{remark}
 \numberwithin{equation}{section}
\newcommand{\dv}{\mathrm{div}}
\newcommand{\tr}{\mathrm{tr}}
\newcommand{\ie}{i.e. }
\newcommand{\X}{\mathfrak{X}}
\newcommand{\W}{\mathcal{W}}
\newcommand{\R}{\mathbb{R}}
\newcommand{\n}{\nabla}
\newcommand{\D}{{\rm d}}
\newcommand{\al}{\alpha}
\newcommand{\lm}{\lambda}
\newcommand{\ta}{\theta}
\newcommand{\ep}{\varepsilon}
\newcommand{\sx}{\mathop{\mathfrak{S}}\limits_{x,y,z}}
\newcommand{\thmref}[1]{The\-o\-rem~\ref{#1}}
\newcommand{\lemref}[1]{Lem\-ma~\ref{#1}}
\begin{document}

%-------------------------------------------------------------------------
% editorial commands: to be inserted by the editorial office
%
%\firstpage{1} \volume{228} \Copyrightyear{2004} \DOI{003-0001}
%
%
%\seriesextra{Just an add-on}
%\seriesextraline{This is the Concrete Title of this Book\br H.E. R and S.T.C. W, Eds.}
%
% for journals:
%
%\firstpage{1}
%\issuenumber{1}
%\Volumeandyear{1 (2004)}
%\Copyrightyear{2004}
%\DOI{003-xxxx-y}
%\Signet
%\commby{inhouse}
%\submitted{March 14, 2003}
%\received{March 16, 2000}
%\revised{June 1, 2000}
%\accepted{July 22, 2000}
%
%
%
%---------------------------------------------------------------------------
%Insert here the title, affiliations and abstract:
%

\title[Conformal Riemannian $P$-Manifolds]
 {Conformal Riemannian $P$-Manifolds \\
 with Connections
whose Curvature Tensors are Riemannian $P$-Tensors }

%----------Author 1
\author[D. Gribacheva]{Dobrinka Gribacheva}

\address{%
Bulgaria Blvd 236\br Department of Algebra and Geometry\br Faculty
of Mathematics and Informatics\br University of Plovdiv\br 4003
Plovdiv\br Bulgaria}

\email{dobrinka@uni-plovdiv.bg}

\thanks{This work was partially supported by project
NI11-FMI-004 of the Scientific Research Fund, Paisii Hilendarski
University of Plovdiv, Bulgaria.}
%----------Author 2
\author[D. Mekerov]{Dimitar Mekerov}
\address{Bulgaria Blvd 236\br
Department of Algebra and Geometry\br Faculty of Mathematics and
Informatics\br University of Plovdiv\br 4003 Plovdiv\br Bulgaria}
\email{mircho@uni-plovdiv.bg}
%----------classification, keywords, date
\subjclass{53C05, 53C15, 53C25}

\keywords{Riemannian almost product manifold, Riemannian metric,
almost product structure, linear connection}

%\date{January 1, 2004}
%----------additions
%\dedicatory{To my boss}
%%% ----------------------------------------------------------------------

\begin{abstract}
The largest class of Riemannian almost product manifolds, which is
closed with respect to the group of the conformal transformations
of the Riemannian metric, is the class of the conformal Riemannian
$P$-manifolds. This class is an analogue of the class of the
conformal K\"ahler manifolds in almost Hermitian geometry. The
main aim of this work is to obtain properties of manifolds of this
class with connections, whose curvature tensors have similar
properties as the K\"ahler tensors in Hermitian geometry.
\end{abstract}

%%% ----------------------------------------------------------------------
\maketitle
%%% ----------------------------------------------------------------------
%\tableofcontents
\section{Introduction}

A Riemannian almost product manifold $(M, P, g)$ is a
differentiable manifold $M$ for which almost product structure $P$
is compatible with the Riemannian metric $g$ such that an isometry
is induced in any tangent space of $M$. The systematic development
of the theory of Riemannian almost product manifolds was started
by K. Yano in \cite{1}, where basic facts of the differential
geometry of these manifolds are given. In \cite{2} A. M. Naveira
gave a classification of Riemannian almost product manifolds with
respect to the covariant differentiation $\n P$, where $\n$  is
the Levi-Civita connection of $g$. This classification is very
similar to the Gray-Hervella classification in \cite{3} of almost
Hermitian manifolds. Having in mind the results in \cite{2}, M.
Staikova and K. Gribachev gave in \cite{4} a classification of the
Riemannian almost product manifolds with $\tr P = 0$. In this case
the manifold $M$ is even-dimensional.

    The geometry of a Riemannian almost product manifold $(M, P, g)$
    is a geometry of both structures $g$ and $P$.
    There are important in this geometry the linear connections
    in respect of which the parallel transport determine an isomorphism
    of the tangent spaces with the structures $g$ and $P$.
    This is valid if and only if these structures are parallel with respect
    to such a connection. In the general case on a Riemannian almost product
    manifold there are countless number linear connections regarding
    which $g$ and $P$ are parallel. Such connections are called natural in \cite{5}.

    In the present work we consider the class $\W_1$ of
conformal Riemannian $P$-manifolds (shortly $\W_1$-manifolds) from
the Staikova-Gribachev classification. It is valid $\W_1 =
\overline{\W}_3\oplus\overline{\W}_6$, were $\overline{\W}_3$ and
$\overline{\W}_6$ are basic classes from the Naveira
classification. The class $\W_1$ is the largest class of
Riemannian almost product manifolds, which is closed with respect
to the group of the conformal transformations of the Riemannian
metric. This class is an analogue of the class of conformal
K\"ahler manifolds in almost Hermitian geometry.

    The main aim of the present study is to obtain properties of a $\W_1$-manifold
    admissive a natural connection whose curvature tensor is a Riemannian $P$-tensor.
    The notion a Riemannian $P$-tensor, introduced in \cite{7}
    on a Riemannian almost product manifold is an analogue of the notion
    of a K\"ahler tensor in Hermitian geometry.
    Natural connections whose curvature tensor is of K\"ahler type
    on almost contact B-metric manifolds are studied in
    \cite{ManIv40}.

    The paper is organized as follows.
    In Sec.~2 we recall necessary facts about the Riemannian almost product manifolds,
    the class $\W_1$, Riemannian $P$-tensors, natural connections.
    In Sec.~3 we obtain some properties of Riemannian $P$-tensors on
    a Riemannian almost product manifold. We obtain a presentation
    of such tensor on a 4-dimensional manifold by its scalar curvatures
    and we establish that any 4-dimensional Riemannian almost product
    manifolds is an almost Einstein manifold with respect to a Riemannian $P$-tensor.
    In Sec.~4 we consider $\W_1$-manifolds admissive a natural
    connection $\n'$
    whose curvature tensor $R'$ is a Riemannian $P$-tensor.
    The main result here is \thmref{thm-4.1} where the associated 1-forms of
    manifold are expressed by the scalar curvatures of $R'$.
    We obtain properties of the manifold for the different cases
    from the classification of such connections given in \cite{14}.
    We also discuss the cases when the manifold with such connection
    belongs to the Naveira classes $\overline{\W}_3$ and $\overline{\W}_6$.
    In Sec.~5 we consider two cases
    of a 4-dimensional manifold with connections whose curvature tensors
    are Riemannian $P$-tensors and we obtain explicit
    expressions
    of the curvature tensor of the Levi-Civita connection in these cases.

%%%%%%%%%%%%%%%%%%%%%%%%%%%%%%%%%%%%%%%%%%%%%% 2

\section{Preliminaries}

Let $(M,P,g)$ be a \emph{Riemannian almost product manifold}, \ie
a differentiable manifold $M$ with a tensor field $P$ of type
$(1,1)$ and a Riemannian metric $g$ such that $P^2x=x$,
$g(Px,Py)=g(x,y)$ for any $x$, $y$ of the algebra $\X(M)$ of the
smooth vector fields on $M$. Further $x,y,z,u,w$ will stand for
arbitrary elements of $\X(M)$ or vectors in the tangent space
$T_cM$ at $c\in M$.

In this work we consider manifolds $(M, P, g)$ with $\tr{P}=0$. In
this case $M$ is an even-dimensional manifold. We assume that
$\dim{M}=2n$.

In \cite{2} A.M.~Naveira gives a classification of Riemannian
almost product manifolds with respect to the tensor $F$ of type
(0,3), defined by $F(x,y,z)=g\left(\left(\nabla_x
P\right)y,z\right), $ where $\n$ is the Levi-Civita connection of
$g$. The tensor $F$ has the properties:
\begin{equation*}
    F(x,y,z)=F(x,z,y)=-F(x,Py,Pz),\qquad
    F(x,y,Pz)=-F(x,Py,z).
\end{equation*}

Using the Naveira classification, in \cite{4} M.~Staikova and
K.~Gribachev give a classification of Riemannian almost product
manifolds $(M,P,g)$ with $\tr P=0$. The basic classes of this
classification are $\W_1$, $\W_2$ and $\W_3$. Their intersection
is the class $\W_0$ of the \emph{Riemannian $P$-manifolds}
(\cite{6}), determined by the condition $F=0$. This class is an
analogue of the class of K\"ahler manifolds in the geometry of
almost Hermitian manifolds.

The class $\W_1$ from the Staikova-Gribachev classification
contains the manifolds which are locally conformal equivalent to
Riemannian $P$-manifolds. This class plays a similar role of the
role of the class of the conformal K\"ahler manifolds in almost
Hermitian geometry. We will say that a manifold from the class
$\W_1$ is a \emph{$\W_1$-manifold}.

The characteristic condition for the class $\W_1$ is the following
\begin{equation*}\label{2}
\begin{array}{l}
\W_1: F(x,y,z)=\frac{1}{2n}\big\{ g(x,y)\ta (z)-g(x,Py)\ta (Pz)
 \big.\\[4pt]
 \phantom{\W_1: F(x,y,z)=\frac{1}{2n}} +g(x,z)\ta (y)-g(x,Pz)\ta (Py)\big\},
\end{array}
\end{equation*}
where the associated 1-form $\ta$ is determined by $
\ta(x)=g^{ij}F(e_i,e_j,x). $ Here $g^{ij}$ will stand for the
components of the inverse matrix of $g$ with respect to a basis
$\{e_i\}$ of $T_cM$ at $c\in M$. The 1-form $\ta$ is
\emph{closed}, \ie $\D\ta=0$, if and only if
$\left(\n_x\ta\right)y=\left(\n_y\ta\right)x$. Moreover, $\ta\circ
P$ is a closed 1-form if and only if
$\left(\n_x\ta\right)Py=\left(\n_y\ta\right)Px$.

In \cite{4} it is proved that
$\W_1=\overline\W_3\oplus\overline\W_6$, where $\overline\W_3$ and
$\overline\W_6$ are the classes from the Naveira classification
determined by the following conditions:
\[
\begin{array}{rl}
\overline\W_3:& \quad F(A,B,\xi)=\frac{1}{n}g(A,B)\ta^v(\xi),\quad
F(\xi,\eta,A)=0,
\\[4pt]
\overline\W_6:& \quad
F(\xi,\eta,A)=\frac{1}{n}g(\xi,\eta)\ta^h(A),\quad F(A,B,\xi)=0,
\end{array}
\]
where $A,B,\xi,\eta\in\X(M)$, $PA=A$, $PB=B$, $P\xi=-\xi$,
$P\eta=-\eta$, $\ta^v(x)=\frac{1}{2}\left(\ta(x)-\ta(Px)\right)$,
$\ta^h(x)=\frac{1}{2}\left(\ta(x)+\ta(Px)\right)$. In the case
when $\tr P=0$, the above conditions for $\overline\W_3$ and
$\overline\W_6$ can be written for any $x,y,z$ in the following
form:
\begin{equation*}\label{4'}
\begin{array}{rl}
    \overline\W_3: \quad
    &F(x,y,z)=\frac{1}{2n}\bigl\{\left[g(x,y)+g(x,Py)\right]\ta(z)\\[4pt]
    &+\left[g(x,z)+g(x,Pz)\right]\ta(y)\bigr\},\quad
    \ta(Px)=-\ta(x),
\end{array}
\end{equation*}
\begin{equation*}\label{5'}
\begin{array}{rl}
    \overline\W_6: \quad
    &F(x,y,z)=\frac{1}{2n}\bigl\{\left[g(x,y)-g(x,Py)\right]\ta(z)\\[4pt]
    &+\left[g(x,z)-g(x,Pz)\right]\ta(y)\bigr\},\quad
    \ta(Px)=\ta(x).
\end{array}
\end{equation*}

In \cite{4}, a tensor $L$ of type (0,4) with pro\-per\-ties%
\begin{equation}\label{2.4}
L(x,y,z,w)=-L(y,x,z,w)=-L(x,y,w,z),
\end{equation}
\begin{equation}\label{2.5}
L(x,y,z,w)+L(y,z,x,w)+L(z,x,y,w)=0
\end{equation}
is called a \emph{curvature-like tensor}. Such a tensor on a
Riemannian almost product manifold $(M,P,g)$ with the property
\begin{equation}\label{2.6}
L(x,y,Pz,Pw)=L(x,y,z,w)
\end{equation}
is called a \emph{Riemannian $P$-tensor} in \cite{7}. This notion
is an analogue of the notion of a K\"ahler tensor in Hermitian
geometry.

Let $S$ be a (0,2)-tensor on a Riemannian almost product manifold.
In \cite{4} it is proved that
\[
\begin{split}
\psi_1(S)(x,y,z,w)
&=g(y,z)S(x,w)-g(x,z)S(y,w)\\[4pt]
&+S(y,z)g(x,w)-S(x,z)g(y,w)
\end{split}
\]
is a curvature-like  tensor if and only if $S(x,y)=S(y,x)$, and
the tensor $$\psi_2(S)(x,y,z,w)=\psi_1(S)(x,y,Pz,Pw)$$ is
curvature-like if and only if $S(x,Py)=S(y,Px)$. Obviously
$$\psi_2(S)(x,y,Pz,Pw)=\psi_1(S)(x,y,z,w).$$ The tensors
\[
\pi_1=\frac{1}{2}\psi_1(g),\qquad
\pi_2=\frac{1}{2}\psi_2(g),\qquad
\pi_3=\psi_1(\widetilde{g})=\psi_2(\widetilde{g})
\]
are curvature-like, and the tensors $\pi_1+\pi_2$, $\pi_3$ are
Riemannian $P$-tensors.

 The curvature tensor $R$ of $\n$ is determined by
$R(x,y)z=\nabla_x \nabla_y z - \nabla_y \nabla_x z -
    \nabla_{[x,y]}z$ and the corresponding tensor of type (0,4) is defined as
follows $R(x,y,z,w)=g(R(x,y)z,w)$. We denote the Ricci tensor and
the scalar curvature of $R$ by $\rho$ and $\tau$, respectively,
\ie $\rho(y,z)=g^{ij}R(e_i,y,z,e_j)$ and
$\tau=g^{ij}\rho(e_i,e_j)$. The associated Ricci tensor $\rho^*$
and the associated scalar curvature $\tau^*$ of $R$ are determined
by $\rho^*(y,z)=g^{ij}R(e_i,y,z,Pe_j)$ and
$\tau^*=g^{ij}\rho^*(e_i,e_j)$. In a similar way there are
determined the Ricci tensor $\rho(L)$ and the scalar curvature
$\tau(L)$ for any curvature-like tensor $L$ as well as the
associated quantities $\rho^*(L)$ and $\tau^*(L)$.

In \cite{5}, a linear connection $\n'$ on a Riemannian almost
product manifold $(M,P,g)$ is called a \emph{natural connection}
if $\n' P=\n' g=0$.

In \cite{9} it is established that the natural connections $\n'$
on a $\W_1$-manifold $(M,P,g)$ form a 2-parametric family, where
the torsion $T$ of $\n'$  is determined by
\begin{equation}\label{2.9}
\begin{split}
    &T(x,y,z)=\frac{1}{2n}\left\{g(y,z)\ta(Px)-g(x,z)\ta(Py)\right\}\\[4pt]
            &+\lm\left\{g(y,z)\ta(x)-g(x,z)\ta(y)+g(y,Pz)\ta(Px)-g(x,Pz)\ta(Py)\right\}\\[4pt]
            &+\mu\left\{g(y,Pz)\ta(x)-g(x,Pz)\ta(y)+g(y,z)\ta(Px)-g(x,z)\ta(Py)\right\},
\end{split}
\end{equation}
where $\lm, \mu \in \R$.

Let us recall the following statements.

\begin{thm}[\cite{14}]\label{thm-2.1}
Let $R'$ is the curvature tensor of a natural connection $\n'$ on
a $\W_1$-manifold $(M,P,g)$. Then the following relation is valid:
\begin{equation*}\label{3.9}
    R=R'-g(p,p)\pi_1-g(q,q)\pi_2-g(p,q)\pi_3-\psi_1(S')-\psi_2(S''),
\end{equation*}
where
\begin{equation}\label{3.6}
\begin{array}{l}
    p=\lm\Omega+\left(\mu+\frac{1}{2n}\right)P\Omega,\quad q=\lm
P\Omega+\mu\Omega,\quad g(\Omega,x)=\theta(x),
\end{array}
\end{equation}
\begin{equation}\label{3.6'}
\begin{array}{rl}
    &S'(y,z)=\lm\left(\n'_y\ta\right)z+\left(\mu+\frac{1}{2n}\right)\left(\n'_y\ta\right)Pz
   \\[4pt]
   &\phantom{S'(y,z)=}-\frac{1}{2n}\left\{\lm\ta(y)\ta(Pz)+\mu\ta(y)\ta(z)\right\},
\end{array}
\end{equation}
\begin{equation}\label{3.6''}
\begin{array}{rl}
    &S''(y,z)=\lm\left(\n'_y\ta\right)z+\mu\left(\n'_y\ta\right)Pz\\[4pt]
    &\phantom{S''(y,z)=}+\frac{1}{2n}\left\{\lm\ta(Py)\ta(z)+\mu\ta(Py)\ta(Pz)\right\}.
\end{array}
\end{equation}
\end{thm}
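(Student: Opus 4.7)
The plan is to derive the theorem via the difference-tensor formalism for the two metric connections $\n$ and $\n'$. First, I would write $\n'=\n+Q$ for a $(1,2)$-tensor $Q$, and use the fact that both connections are metric ($\n g=\n' g=0$) so that the associated $(0,3)$-tensor $Q(x,y,z):=g(Q(x,y),z)$ is skew-symmetric in $y,z$. Combined with the torsion relation $T(x,y)=Q(x,y)-Q(y,x)$, this yields the standard identity
\begin{equation*}
2Q(x,y,z)=T(x,y,z)-T(y,z,x)+T(z,x,y).
\end{equation*}
Substituting the explicit torsion \eqref{2.9} here produces $Q$ as a combination of $\Omega$, $P\Omega$ and the 1-form $\ta$, with coefficients that group exactly into the vectors $p$ and $q$ of \eqref{3.6}.

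Second, I would invoke the classical curvature-difference formula
\begin{equation*}
R'(x,y)z-R(x,y)z=(\n_x Q)(y,z)-(\n_y Q)(x,z)+Q(x,Q(y,z))-Q(y,Q(x,z)).
\end{equation*}
The quadratic-in-$Q$ piece produces combinations of $g$ and $g(\cdot,P\cdot)$ weighted by $g(p,p)$, $g(q,q)$, $g(p,q)$, which after the antisymmetrization in $x,y$ assemble precisely the tensors $g(p,p)\pi_1$, $g(q,q)\pi_2$, $g(p,q)\pi_3$ by the definitions of $\pi_1,\pi_2,\pi_3$ recalled before the theorem. The linear-in-$\n Q$ piece is a difference of the form $(\n_x Q)(y,z,\cdot)-(\n_y Q)(x,z,\cdot)$; after converting $\n\ta$ to $\n'\ta$ via $Q$ itself (which produces exactly the $\ta(\cdot)\ta(\cdot)$ correction terms visible in \eqref{3.6'}--\eqref{3.6''}), this difference splits into two parts of the shape $\psi_1(\tilde S)$ and $\psi_2(\tilde{\tilde S})$, to be identified with $\psi_1(S')$ and $\psi_2(S'')$.

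Third, I would verify that $\psi_1(S')$ and $\psi_2(S'')$ really are curvature-like tensors by checking the symmetries $S'(x,y)=S'(y,x)$ and $S''(x,Py)=S''(y,Px)$, both of which follow from the closedness of $\ta$ and of $\ta\circ P$ on a $\W_1$-manifold stated just after the characterization of $\W_1$. The main obstacle is the bookkeeping in the second step: keeping track of the scalar coefficients $\lm$, $\mu$, $\frac{1}{2n}$, repeatedly using $P^2=\id$, $g(Px,Py)=g(x,y)$ and the $\W_1$-defining identity for $F$, and finally recognizing that the resulting $(0,4)$-tensor decomposes into exactly the five summands of the claimed formula with no residual terms. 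The precise $\frac{1}{2n}$-shifts and sign patterns hidden in the definitions of $p$, $q$, $S'$, $S''$ are arranged so that this matching occurs without leftover pieces.
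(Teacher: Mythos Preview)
The paper does not give its own proof of this theorem; it is quoted verbatim from reference \cite{14} in the preliminaries. Your approach via the difference tensor $Q=\n'-\n$, the formula $2Q(x,y,z)=T(x,y,z)-T(y,z,x)+T(z,x,y)$ for metric connections, and the standard curvature-difference identity is the natural route and is almost certainly what \cite{14} does as well.

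There is, however, a genuine error in your third step. You assert that the symmetries $S'(x,y)=S'(y,x)$ and $S''(x,Py)=S''(y,Px)$ ``follow from the closedness of $\ta$ and of $\ta\circ P$ on a $\W_1$-manifold stated just after the characterization of $\W_1$.'' The paper states no such thing. The passage you have in mind only records the \emph{equivalences} $\D\ta=0\Leftrightarrow(\n_x\ta)y=(\n_y\ta)x$ and the analogue for $\ta\circ P$; it does not claim either form is closed on an arbitrary $\W_1$-manifold. In fact \thmref{thm-2.2} is precisely a case distinction according to whether $\ta$ and $\ta\circ P$ are closed, so on a general $\W_1$-manifold they need not be.

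Fortunately this step is also unnecessary. The theorem asserts only the identity for $R$, not that each summand on the right is individually a curvature-like tensor. The expressions $\psi_1(S')$ and $\psi_2(S'')$ are defined for arbitrary $(0,2)$-tensors, and the decomposition holds as an algebraic consequence of the curvature-difference formula regardless of any symmetry of $S'$ or $S''$. Simply drop the third step; the first two already give the result once the bookkeeping is carried out.
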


\begin{thm}[\cite{14}]\label{thm-2.2}
Let $R'$ be the curvature tensor of a natural connection $\n'$ on
a $\W_1$-manifold
$(M,P,g)\notin\overline{\W}_3\cup\overline{\W}_6$. Then the all
possible cases are as follows:
\begin{enumerate}
    \item[i)] If $\n'$ is the connection $D$ determined by
    $\lm=\mu=0$, then $R'$ is a Riemannian $P$-tensor if and only
    if the 1-form $\ta$ is not closed and the 1-form $\ta\circ P$ is
    closed;
    \item[ii)] If $\n'$ is the connection $\widetilde{D}$ determined by
    $\lm=0$, $\mu=-\frac{1}{2n}$, then $R'$ is a Riemannian $P$-tensor if and only
    if the 1-form $\ta$ is closed and the 1-form $\ta\circ P$ is
    not closed;
    \item[iii)] If $\n'$ is a connection for which
    $\lm^2-\mu^2-\frac{\mu}{2n}\neq 0$, then $R'$ is a Riemannian $P$-tensor if and only
    if the 1-forms $\ta$ and $\ta\circ P$ are closed;
    \item[iv)] If $\n'$ is a connection for which $\lm\neq 0$,
    $\lm^2-\mu^2-\frac{\mu}{2n}=0$ and $R'$ is a Riemannian $P$-tensor then the 1-forms $\ta$ and
    $\ta\circ P$ are not closed.
\end{enumerate}
\end{thm}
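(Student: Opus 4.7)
The plan is to reduce the Riemannian $P$-tensor condition on $R'$ to a $2\times 2$ linear system in the exterior derivatives $d\ta$ and $d(\ta\circ P)$, whose discriminant is $\lm^2-\mu^2-\mu/(2n)$. First, since $\n'$ is natural ($\n'g=0$ and $\n'P=0$), the tensor $R'$ automatically satisfies the antisymmetries \eqref{2.4} (the first by definition, the second from $\n'g=0$) and the $P$-compatibility \eqref{2.6} (from $R'(x,y)Pz=PR'(x,y)z$ and the $g$-orthogonality of $P$). Hence $R'$ is a Riemannian $P$-tensor if and only if the first Bianchi identity \eqref{2.5} holds for it.

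By \thmref{thm-2.1}, $R'-R=g(p,p)\pi_1+g(q,q)\pi_2+g(p,q)\pi_3+\psi_1(S')+\psi_2(S'')$, and each of $R,\pi_1,\pi_2,\pi_3$ is already curvature-like. So $R'$ satisfies \eqref{2.5} iff $\psi_1(S')+\psi_2(S'')$ does. A direct computation of the cyclic sum, using the definitions of $\psi_1,\psi_2$, gives
\begin{equation*}
\sx\bigl[\psi_1(S')+\psi_2(S'')\bigr](x,y,z,w)=\sx\bigl\{g(x,w)A(y,z)+g(Px,w)B(y,z)\bigr\},
\end{equation*}
where $A(y,z):=S'(y,z)-S'(z,y)$ and $B(y,z):=S''(y,Pz)-S''(z,Py)$. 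At a generic point, vanishing of this tensor identity---checked by a Ricci-type contraction of $(x,w)$ using $\tr P=0$, or by decomposing arguments into the $P$-eigenspaces $V_\pm$---forces $A\equiv 0$ and $B\equiv 0$.

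To compute $A$ and $B$ explicitly, I would exploit the identity $(\n'_y\alpha)z-(\n'_z\alpha)y=d\alpha(y,z)-\alpha(T(y,z))$, valid for any $1$-form $\alpha$. A direct evaluation of $T(y,z,\Omega)$ and $T(y,z,P\Omega)$ from \eqref{2.9}, using $g(\Omega,\cdot)=\ta$, yields
\begin{equation*}
\ta(T(y,z))=\tfrac{1}{2n}\bigl[\ta(z)\ta(Py)-\ta(y)\ta(Pz)\bigr],\qquad \ta(PT(y,z))=0,
\end{equation*}
after which the quadratic-in-$\ta$ torsion contributions cancel exactly the explicit quadratic correction terms in \eqref{3.6'} and \eqref{3.6''}, leaving the clean expressions
\begin{equation*}
A=\lm\,d\ta+\bigl(\mu+\tfrac{1}{2n}\bigr)d(\ta\circ P),\qquad B=\mu\,d\ta+\lm\,d(\ta\circ P).
\end{equation*}

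Finally, $A=B=0$ is a $2\times 2$ linear system in $(d\ta,d(\ta\circ P))$ whose matrix has determinant $\lm^2-\mu^2-\mu/(2n)$, and the four cases of the theorem follow by case analysis. In case (iii), the non-vanishing determinant forces the trivial solution $d\ta=d(\ta\circ P)=0$. At the exceptional connection $D=(0,0)$ of case (i), the system collapses to $d(\ta\circ P)=0$ alone; requiring $\ta$ not closed pins down the regime in which $D$ is the unique natural connection with $R'$ a Riemannian $P$-tensor, distinguishing it from the degenerate overlap with case (iii). Case (ii) at $\widetilde D=(0,-\tfrac{1}{2n})$ is perfectly symmetric, yielding $d\ta=0$. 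In case (iv), $\lm\ne 0$ together with vanishing determinant makes the system rank one, forcing $d(\ta\circ P)=-(\mu/\lm)\,d\ta$, so closedness of one form implies the other and a non-case-(iii) instance has neither form closed. The main obstacle is the computation in the third paragraph: verifying that the quadratic-in-$\ta$ corrections from the torsion exactly cancel those in \eqref{3.6'} and \eqref{3.6''}, so that the clean discriminant $\lm^2-\mu^2-\mu/(2n)$ emerges and the classification becomes mechanical.
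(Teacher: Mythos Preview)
The present paper does not prove \thmref{thm-2.2}; it is quoted from \cite{14}, so there is no in-paper proof to compare against. Your strategy---reduce the Riemannian $P$-tensor condition on $R'$ to the first Bianchi identity for $\psi_1(S')+\psi_2(S'')$, extract the skew parts $A(y,z)=S'(y,z)-S'(z,y)$ and $B(y,z)=S''(y,Pz)-S''(z,Py)$, and analyze the resulting $2\times 2$ system in $\bigl(\D\ta,\D(\ta\circ P)\bigr)$ with determinant $\lm^2-\mu^2-\mu/(2n)$---is the right shape, and your computation of $A$ and $B$ (including the cancellation of the quadratic-in-$\ta$ torsion terms against the explicit corrections in \eqref{3.6'}, \eqref{3.6''}) checks out.

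There is, however, a genuine gap where you assert that vanishing of $\sx\bigl\{g(x,w)A(y,z)+g(Px,w)B(y,z)\bigr\}$ forces $A\equiv 0$ and $B\equiv 0$ separately. Contracting over $(x,w)$ with $g^{ij}$ gives $(2n-2)A(y,z)=B(Py,z)+B(y,Pz)$, and contracting after $w\mapsto Pw$ gives $(2n-2)B(y,z)=A(Py,z)+A(y,Pz)$; iterating these relations yields $4n(n-2)\,B=0$, which forces $B=0$ (hence $A=0$) only when $n\ge 3$. For $n=2$---precisely the $4$-dimensional setting the paper exploits throughout Section~5---neither the contraction nor the eigenspace argument closes. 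Writing $V_\pm$ for the $(\pm 1)$-eigenbundles of $P$ (each $2$-dimensional when $n=2$), testing the identity on eigen-triples yields only $A=B=0$ on $V_+\times V_-$, $A=B$ on $V_+\times V_+$, and $A=-B$ on $V_-\times V_-$; the all-$V_+$ and all-$V_-$ triples contribute nothing, since any three vectors in a $2$-plane are linearly dependent. Thus in dimension $4$ the obstruction map $(A,B)\mapsto\sx\{\cdots\}$ has a $2$-dimensional kernel, and your conclusion $A=B=0$ does not follow from the stated argument. To finish for $n=2$ you need either an additional identity exploiting the explicit form $A=\lm\,\D\ta+(\mu+\tfrac{1}{2n})\D(\ta\circ P)$, $B=\mu\,\D\ta+\lm\,\D(\ta\circ P)$, or a direct computation via the first Bianchi identity with torsion, $\sx\bigl\{R'(x,y)z-(\n'_xT)(y,z)-T(T(x,y),z)\bigr\}=0$, rather than the detour through \thmref{thm-2.1}.
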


Let us remark that the connection $D$ determined by $\lm=\mu=0$ is
investigated in \cite{8}.

%%%%%%%%%%%%%%%%%%%%%%%%%%%%%%%%%%%%%%%%%%%%%% 3

\section{Some properties of the Riemannian $P$-tensors
on Riemannian almost product manifolds}

\begin{lem}\label{lem-3.1}
Let $L$ be a Riemannian $P$-tensor on a Riemannian almost
product manifold $(M,P,g)$. Then the following properties are
valid:
\begin{equation}\label{3.12}
    L(x,Py,Pz,w)=L(Px,Py,z,w)=L(x,y,z,w),
\end{equation}
\begin{equation}\label{3.13}
    L(Px,y,z,w)=L(x,Py,z,w)=L(x,y,Pz,w)=L(x,y,z,Pw).
\end{equation}
\end{lem}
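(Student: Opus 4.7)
The plan is to use the curvature-tensor symmetries (2.4)--(2.5) (which imply the pair-swap symmetry $L(x,y,z,w) = L(z,w,x,y)$), the defining property (2.6), and the first Bianchi identity to peel off the equalities one at a time.

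First I would extract the easy content. Applying the pair-swap symmetry to (2.6) and relabeling gives $L(Px,Py,z,w) = L(x,y,z,w)$, which is the second equality of (3.12). Substituting $w\mapsto Pw$ in (2.6) yields $L(x,y,Pz,w) = L(x,y,z,Pw)$, and substituting $y\mapsto Py$ in the second equality of (3.12) yields $L(Px,y,z,w) = L(x,Py,z,w)$; these are the two outer equalities of (3.13).

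The only remaining assertions are the middle equality of (3.13), $L(x,Py,z,w)=L(x,y,Pz,w)$, and the first equality of (3.12). For the middle equality of (3.13) I would introduce the tensor
\[
B(x,y,z,w) := L(Px,y,z,w) - L(x,y,Pz,w),
\]
whose vanishing is precisely the claim, and compute the cyclic sum $B(x,y,z,w)+B(y,z,x,w)+B(z,x,y,w)$. It splits into two cyclic $L$-sums. After applying the move $L(a,b,Pc,d)=L(a,b,c,Pd)$ to each term, the first becomes a cyclic Bianchi sum with last argument $Pw$ and hence vanishes by (2.5); after applying $L(Pa,b,c,d)=L(a,Pb,c,d)$ and then one instance of the first Bianchi identity for $L$, the second collapses to exactly $B(z,x,y,w)$. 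The conclusion is $B(x,y,z,w)+B(y,z,x,w)=0$, and cycling this relation in $(x,y,z)$ twice forces $2B\equiv 0$, i.e.\ $B\equiv 0$.

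Once (3.13) is established, the first equality of (3.12) is immediate: view $L(x,Py,Pz,w)$ as an instance of $L(a,b,Pc,d)$ with $a=x$, $b=Py$, $c=z$, $d=w$ and use (3.13) to transfer the $P$ from slot~3 to slot~1, giving $L(x,Py,Pz,w)=L(Px,Py,z,w)=L(x,y,z,w)$ by the second equality of (3.12). The main obstacle is the simplification of the $L(P\cdot,\cdot,\cdot,w)$ cyclic sum in the Bianchi computation for $B$: recognizing that it collapses to a single $B$-term rather than vanishing requires choosing the right Bianchi identity instance, and this is the only non-automatic step in the argument.
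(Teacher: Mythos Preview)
Your argument is correct. The verification of the key claim---that the cyclic $L(P\cdot,\cdot,\cdot,w)$-sum collapses to a single $B$-term---goes through exactly as you describe: after the move $L(Pa,b,c,d)=L(a,Pb,c,d)$ one Bianchi instance (say on $(x,Py,z)$) produces two cancellations and leaves $L(Pz,x,y,w)-L(z,x,Py,w)=B(z,x,y,w)$, and the cycling then forces $B\equiv 0$.

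The paper's proof uses the same ingredients but in a different order. It first targets the identity $L(x,Py,Pz,w)=L(x,y,z,w)$ directly: expand $L(x,Py,Pz,w)$ by Bianchi, substitute $x\mapsto Px$, $y\mapsto Py$ in the resulting relation, add the two equations, and simplify using one more Bianchi identity together with $L(Px,Py,z,w)=L(x,y,z,w)$. Only afterward does it read off \eqref{3.13} from \eqref{3.12} and \eqref{2.6}. Your route reverses this: you establish \eqref{3.13} first via the auxiliary tensor $B$ and a cyclic-sum collapse, and then deduce the remaining piece of \eqref{3.12}. Both arguments rest on exactly the same two tools (the Bianchi identity and the pair symmetry $L(Px,Py,z,w)=L(x,y,z,w)$); the paper's summation of two substituted Bianchi expansions is marginally shorter, while your $B$-tensor formulation makes the ``one non-automatic step'' more visibly isolated.
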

\begin{proof}
Equalities \eqref{2.4}, \eqref{2.5} and \eqref{2.6} imply
\begin{equation}\label{3.13'}
    L(Px,Py,z,w)=L(x,y,z,w),
\end{equation}
and the following equality follows from \eqref{2.5} and
\eqref{3.13'}:
\[
    L(x,Py,Pz,w)=-L(y,z,x,w)-L(z,Px,Py,w).
\]
In the latter equality, we substitute $Px$ and $Py$ for $x$ and
$y$, respectively. Then, according to \eqref{3.13'}, we obtain
\[
    L(x,Py,Pz,w)=-L(Py,z,Px,w)-L(z,x,y,w).
\]
The summation of the latter two equalities, bearing in mind
\eqref{2.5} and \eqref{2.6}, implies
$
    L(x,Py,Pz,w)=L(x,y,z,w).
$
Equalities \eqref{3.12} follow from the last equality and
\eqref{3.13'} as well as \eqref{3.13} --- from \eqref{3.12} and
\eqref{2.6}.
\end{proof}

\begin{thm}\label{thm-3.2}
A curvature-like tensor $L$ on 4-dimensional Riemannian almost
product manifold is a Riemannian $P$-tensor if and only if $L$ has
the following form:
\begin{equation}\label{3.14}
    L=\frac{1}{8}\left\{\tau(L)(\pi_1+\pi_2)+\tau^*(L)\pi_3\right\}.
\end{equation}
\end{thm}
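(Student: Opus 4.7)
The ``if'' direction is immediate from the preliminary fact that $\pi_1+\pi_2$ and $\pi_3$ are themselves Riemannian $P$-tensors; any linear combination of Riemannian $P$-tensors is again one. The content of the theorem lies in the reverse implication, and the plan is to show that in dimension four the pointwise space of Riemannian $P$-tensors is only two-dimensional, so that the two scalar invariants $\tau(L)$ and $\tau^*(L)$ suffice to pin down $L$.

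At a fixed point $c\in M$, I would choose an orthonormal basis $\{e_1,e_2,e_3,e_4\}$ of $T_cM$ adapted to $P$, with $Pe_1=e_1$, $Pe_2=e_2$, $Pe_3=-e_3$, $Pe_4=-e_4$; this is possible because $\tr P=0$ forces both eigenspaces to have dimension $n=2$. Writing $\ep_i\in\{\pm 1\}$ for the $P$-eigenvalue of $e_i$ and $L_{ijkl}=L(e_i,e_j,e_k,e_l)$, \lemref{lem-3.1} together with the defining identity \eqref{2.6} yields
\[
\ep_i\ep_j\, L_{ijkl}=\ep_k\ep_l\, L_{ijkl}=L_{ijkl},\qquad \ep_i\, L_{ijkl}=\ep_k\, L_{ijkl}.
\]
Hence $L_{ijkl}\neq 0$ forces $\ep_i=\ep_j=\ep_k=\ep_l$, so every nonvanishing component is concentrated in one of the two blocks with all indices in $\{1,2\}$ or all in $\{3,4\}$. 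Inside each $2$-dimensional block the antisymmetries \eqref{2.4}--\eqref{2.5} leave exactly one independent entry, so $L$ at $c$ is encoded by the two scalars $A=L_{1212}$ and $B=L_{3434}$.

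To read off the coefficients, I would compute the traces in this basis and obtain
\[
\tau(L)=-2(A+B),\qquad \tau^*(L)=2(B-A),
\]
and apply the same recipe to the reference tensors to get $\tau(\pi_1+\pi_2)=8$, $\tau^*(\pi_1+\pi_2)=0$, $\tau(\pi_3)=0$, $\tau^*(\pi_3)=8$. The difference $D:=L-\frac{1}{8}\{\tau(L)(\pi_1+\pi_2)+\tau^*(L)\pi_3\}$ is then a Riemannian $P$-tensor with $\tau(D)=\tau^*(D)=0$; applying the displayed formulae to $D$ forces its $A$- and $B$-components to vanish, so $D=0$ and \eqref{3.14} follows.

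The main obstacle is the component-vanishing step in the second paragraph: one must combine all three identities of \lemref{lem-3.1} (and \eqref{2.6}) to force the four $P$-eigenvalues to coincide rather than merely match pairwise. Once that reduction is in place, the remainder is routine $4\times 4$ trace bookkeeping.
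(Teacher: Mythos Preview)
Your argument is correct, and the overall strategy---use \lemref{lem-3.1} to show that a Riemannian $P$-tensor at a point has only two independent components, then match these against $\pi_1+\pi_2$ and $\pi_3$---is the same as the paper's. The execution differs in the choice of basis: you work in a $P$-eigenbasis $\{e_1,e_2,e_3,e_4\}$ with $Pe_i=\ep_i e_i$, which makes the block structure transparent and reduces $L$ to the pair $A=L_{1212}$, $B=L_{3434}$; the paper instead chooses an orthonormal \emph{adapted} basis $\{E_1,E_2,PE_1,PE_2\}$ in which $P$ permutes rather than diagonalises, and parametrises $L$ by $\nu=L(E_1,E_2,E_1,E_2)$ and $\nu^*=L(E_1,E_2,E_1,PE_2)$. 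From there the paper writes $L$ explicitly in coordinates (via auxiliary bilinear forms $a,b$) and recognises the result as $-\nu(\pi_1+\pi_2)-\nu^*\pi_3$ before computing $\tau(L)=-8\nu$, $\tau^*(L)=-8\nu^*$, whereas you go the cleaner route of forming the difference $D$ and killing it by its vanishing traces. Your eigenbasis argument is a bit more direct; the paper's version has the side benefit of producing an explicit coordinate formula for $L$ and of exhibiting the totally real sectional curvature $\nu$ directly, which feeds into \propref{prop-3.3}.
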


\begin{proof}
Let $L$ be a Riemannian $P$-tensor on a 4-dimensional Riemannian
almost product manifold $(M,P,g)$ and
$\left\{E_1,E_2,PE_1,PE_2\right\}$ be an orthonormal adapted basis
(\cite{15}) of $T_cM$, $c\in M$. Taking into account \eqref{2.6}
and \lemref{lem-3.1}, we obtain that the non-zero components of
$L$ are expressed by the sectional curvature
$\nu=L(E_1,E_2,E_1,E_2)$ of the 2-plane $\left\{E_1,E_2\right\}$
and its associated sectional curvature
$\nu^*=L(E_1,E_2,E_1,PE_2)$.

Let the arbitrary vectors $x,y,z,w$ have the form
\begin{equation*}\label{3.16}
\begin{array}{l}
    x=x^1E_1+x^2E_2+x^3PE_1+x^4PE_2,\\[4pt]
    y=y^1E_1+y^2E_2+y^3PE_1+y^4PE_2,\\[4pt]
    z=z^1E_1+z^2E_2+z^3PE_1+z^4PE_2,\\[4pt]
    w=w^1E_1+w^2E_2+w^3PE_1+w^4PE_2.
\end{array}
\end{equation*}

Taking into account the linearity of $L$, equalities \eqref{2.4},
\eqref{2.6} and \lemref{lem-3.1}, we get
\begin{equation}\label{3.17}
\begin{split}
    L(x,y,z,w)&=\nu\left\{a(x,y)a(z,w)+b(x,y)b(z,w)\right\}\\[4pt]
    &+\nu^*\left\{a(x,y)b(z,w)+b(x,y)a(z,w)\right\},
\end{split}
\end{equation}
where
\[
\begin{array}{l}
    a(x,y)=x^1y^2+x^3y^4-x^2y^1-x^4y^3,\\[4pt]
    b(x,y)=x^1y^4+x^3y^2-x^2y^3-x^4y^1.
\end{array}
\]
Since the basis $\left\{E_1,E_2,PE_1,PE_2\right\}$ is orthonormal,
the following equalities are valid
\[
\begin{array}{l}
    (\pi_1+\pi_2)(x,y,z,w)=-a(x,y)a(z,w)-b(x,y)b(z,w),\\[4pt]
    \pi_3(x,y,z,w)=-a(x,y)b(z,w)-b(x,y)a(z,w)
\end{array}
\]
and then \eqref{3.17} takes the form
\begin{equation}\label{3.18}
    L(x,y,z,w)=-\nu(\pi_1+\pi_2)(x,y,z,w)-\nu^*\pi_3(x,y,z,w).
\end{equation}
Equality \eqref{3.18} implies the following form of the Ricci
tensor of $L$:
\begin{equation}\label{3.19}
    \rho(L)(y,z)=-2\nu g(y,z)-2\nu^* g(y,Pz).
\end{equation}
Then we obtain the following formulae for the scalar curvatures of
$L$:
\[
    \tau(L)=-8\nu,\qquad \tau^*(L)=-8\nu^*.
\]
Thus, \eqref{3.18} implies \eqref{3.14}.

Vice versa, let $L$ be a curvature-like tensor of the form
\eqref{3.14}. Then, according to $\pi_1+\pi_2$ and $\pi_3$ are
Riemannian $P$-tensors, the tensor $L$ is also a Riemannian
$P$-tensor.
\end{proof}

Bearing in mind \cite{6}, a 2-plane $\al\in T_c M$ of a Riemannian
almost product manifold $(M,P,g)$ is called totally real
(respectively, invariant) if $\al$ and $P\al$ are orthogonal
(respectively, $\al$ and $P\al$ coincide). By \lemref{lem-3.1} we
establish that  all totally real basic 2-planes in the orthonormal
adapted basis $\left\{E_1,E_2,PE_1,PE_2\right\}$ have a sectional
curvature $\nu=L(E_1,E_2,E_1,E_2)$ with respect to the Riemannian
$P$-tensor $L$ and the invariant 2-planes  have zero sectional
curvatures with respect to $L$.

Since equality \eqref{3.19} for a Riemannian $P$-tensor $L$
implies
\[
    \rho(L)(y,z)=\frac{1}{4}\left\{\tau(L) g(y,z)+\tau^*(L)
    g(y,Pz)\right\},
\]
the manifold $(M,P,g)$ is almost Einstein with respect to  $L$.

So, the following statement is valid.

\begin{prop}\label{prop-3.3}
Each 4-dimensional Riemannian almost product manifold is almost
Einstein, which is of point-wise constant totally real sectional
curvatures and zero invariant sectional curvatures with respect to
arbitrary Riemannian $P$-tensor.
\end{prop}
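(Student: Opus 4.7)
The plan is to argue that everything needed is already essentially present in the proof of \thmref{thm-3.2}: it suffices to extract three consequences of the normal form \eqref{3.14} (equivalently, of \eqref{3.18}, \eqref{3.19}) and interpret them in terms of sectional curvatures and the Ricci tensor.

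First I would note that, by \thmref{thm-3.2}, any Riemannian $P$-tensor $L$ on a 4-dimensional Riemannian almost product manifold is of the form
\[
    L=\frac{1}{8}\left\{\tau(L)(\pi_1+\pi_2)+\tau^*(L)\pi_3\right\},
\]
so that its Ricci tensor is precisely
\[
    \rho(L)(y,z)=\frac{1}{4}\left\{\tau(L)\,g(y,z)+\tau^*(L)\,g(y,Pz)\right\},
\]
which is the almost Einstein condition (Ricci tensor is a linear combination of $g$ and $g(\,\cdot\,,P\,\cdot\,)$). This last equality is already recorded immediately above the proposition and follows from \eqref{3.19} together with $\tau(L)=-8\nu$, $\tau^*(L)=-8\nu^*$.

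Next I would verify the claim about invariant 2-planes. For any $x$ with $Px\neq\pm x$, using the identity $L(x,y,Pz,w)=L(x,y,z,Pw)$ from \eqref{3.13} and the skew-symmetry \eqref{2.4}, I get
\[
    L(x,Px,x,Px)=L(x,Px,Px,P^2x)=L(x,Px,Px,x)=-L(x,Px,x,Px),
\]
so $L(x,Px,x,Px)=0$, and the invariant sectional curvature vanishes. For the totally real 2-planes $\{E_i,E_j\}$, $\{E_i,PE_j\}$, $\{PE_i,PE_j\}$ in the adapted basis $\{E_1,E_2,PE_1,PE_2\}$, repeated application of \lemref{lem-3.1} (in particular $L(x,y,Pz,Pw)=L(x,y,z,w)$ and $L(x,Py,z,w)=L(x,y,Pz,w)$) reduces each such sectional curvature to $\nu=L(E_1,E_2,E_1,E_2)$, giving the same value at the point $c$ for every totally real basic 2-plane; point-wise constancy over arbitrary totally real 2-planes then follows from the explicit form \eqref{3.18}, since on any such plane the coefficient of $\nu^*$ is forced to vanish while the coefficient of $\nu$ is determined solely by $g$.

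There is no real obstacle here; the only subtlety is making precise what ``point-wise constant totally real sectional curvatures'' means and checking that \eqref{3.18} actually delivers the same value $\nu$ on every totally real 2-plane at $c$ (not only on the basic ones), which I would handle by plugging two arbitrary unit vectors $x,y$ with $g(x,Py)=0$ into the closed form \eqref{3.18} and simplifying. The almost Einstein property is then immediate from the displayed formula for $\rho(L)$.
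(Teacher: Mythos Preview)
Your proposal is correct and follows essentially the same route as the paper: the paper's argument is the short discussion immediately preceding the proposition, which invokes \lemref{lem-3.1} to see that the totally real basic 2-planes all have sectional curvature $\nu$ and the invariant ones have sectional curvature zero, and then reads off the almost Einstein property from \eqref{3.19}. Your only addition is the explicit computation $L(x,Px,x,Px)=0$ and the remark that one should also check \emph{arbitrary} totally real 2-planes via \eqref{3.18}; this is a welcome bit of care, but it does not change the strategy.
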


%%%%%%%%%%%%%%%%%%%%%%%%%%%%%%%%%%%%%%%%%%%%% 4

\section{$\W_1$-manifolds with a natural connection whose curvature tensor
is a Riemannian $P$-tensor}

It is known (\cite{16}), that the curvature tensor of a linear
connection $\n'$ with torsion $T$ satisfies the second Bianchi
identity
\begin{equation}\label{4.4*}
   \sx
\left\{\left(\n'_x
R'\right)(y,z,u)+R'\left(T(x,y),z\right),u\right\}=0,
\end{equation}
where $\sx$ is stand for the cyclic sum by $x,y,z$.

Now, let $\n'$ be a natural connection on $\W_1$-manifold
$(M,P,g)$. Because $\n'g=0$, equality \eqref{4.4*} implies
\begin{equation}\label{4.4}
   \sx
\left\{\left(\n'_x
R'\right)(y,z,u,w)+R'\left(T(x,y),z,u,w\right)\right\}=0.
\end{equation}

Using \eqref{2.9} for the torsion tensor $T$ of type (1,2) of
$\n'$, we have
\begin{equation*}\label{4.1}
\begin{split}
    T(x,y)&=\left\{\lm\ta(x)+\left(\mu+\frac{1}{2n}\right)\ta(Px)\right\}y+\left[\lm\ta(Px)+\mu\ta(x)\right]Py\\[4pt]
           &
           -\left\{\lm\ta(y)+\left(\mu+\frac{1}{2n}\right)\ta(Py)\right\}x-\left[\lm\ta(Py)+\mu\ta(y)\right]Px,
\end{split}
\end{equation*}
which implies immediately
\begin{equation}\label{4.3}
   T(x,y)-PT(Px,y)=\frac{1}{2n}\left\{\ta(Px)y-\ta(x)Py\right\}.
\end{equation}

Let the curvature tensor $R'$ of $\n'$ be a Riemannian $P$-tensor.

In \eqref{4.4}, we substitute $Px$ and $Pw$ for $x$ and $w$,
respectively. Then, we subtract the result equality from
\eqref{4.4}. Using \lemref{lem-3.1} for $R'$, we get
\[
\begin{split}
&\left(\n'_x R'\right)(y,z,u,w)+R'\left(T(x,y)-PT(Px,y),z,u,w\right)\\[4pt]
&-\left(\n'_{Px} R'\right)(y,z,u,Pw)
-R'\left(T(x,z)-PT(Px,z),y,u,w\right)=0.
\end{split}
\]

Hence, applying \lemref{lem-3.1}, the property \eqref{4.3} and the
linearity of $R'$, we obtain
\begin{equation}\label{4.6}
\begin{split}
&\left(\n'_x R'\right)(y,z,u,w)-\left(\n'_{Px} R'\right)(y,z,u,Pw)\\[4pt]
&+\frac{1}{n}\left\{\ta(Px)R'(y,z,u,w)-\ta(x)R'(y,Pz,u,w)\right\}=0.
\end{split}
\end{equation}

Let $R'$ have a Ricci tensor $\rho'$ and scalar curvatures $\tau'$
and $\tau^{*}{'}$.

Applying a contraction by $g^{ij}$ to $y=e_i$ and $w=e_j$, from
\eqref{4.6} we get
\[
\begin{split}
&\left(\n'_x \rho'\right)(z,u)-\left(\n'_{Px} \rho'\right)(z,Pu)%\\[4pt]
+\frac{1}{n}\left\{\ta(Px)\rho'(z,u)-\ta(x)\rho'(Pz,u)\right\}=0.
\end{split}
\]
After that,  applying a contraction by $g^{ks}$ to $z=e_k$ and
$u=e_s$, we have
\begin{equation}\label{4.8}
    \D\tau'(x)-\D\tau^{*}{'}(Px)+\frac{1}{n}
    \left\{\ta(Px)\tau'-\ta(x)\tau^{*}{'}\right\}=0,
\end{equation}
which implies
\begin{equation}\label{4.9}
    \D\tau'(Px)-\D\tau^{*}{'}(x)+\frac{1}{n}
    \left\{\ta(x)\tau'-\ta(Px)\tau^{*}{'}\right\}=0.
\end{equation}

Equalities \eqref{4.8} and \eqref{4.9} determine a linear system
for the 1-forms $\ta(x)$ and $\ta(Px)$ with a determinant
$\Delta=\left(\tau^{*}{'}\right)^2-\left(\tau{'}\right)^2$.

%%%%%%%%%%%%%%%%%%%%%%%%%%%%%%%%%%%%%%%%%%%%% 4.1

\subsection{Case $(M,P,g)\notin\overline{\W}_3\cup\overline{\W}_6$}

In this case, for the considered $\W_1$-manifold is valid
$\ta(x)\neq \pm \ta(Px)$.

Let $\Delta\neq 0$, i.e.
$\left|\tau^{*}{'}\right|\neq\left|\tau{'}\right|$. Then the
linear system has the following solution:
\begin{equation}\label{4.10}
\begin{split}
&\ta(x)=\frac{n}{2}\left\{\D\ln \left|\frac{\tau^{*}{'}+\tau{'}}{\tau^{*}{'}-\tau{'}}\right|(x)
-\D\ln \left|\left(\tau^{*}{'}\right)^2-\left(\tau{'}\right)^2\right|(Px)\right\},\\[4pt]
&\ta(Px)=\frac{n}{2}\left\{\D\ln
\left|\frac{\tau^{*}{'}+\tau{'}}{\tau^{*}{'}-\tau{'}}\right|(Px)
-\D\ln
\left|\left(\tau^{*}{'}\right)^2-\left(\tau{'}\right)^2\right|(x)\right\}.
\end{split}
\end{equation}

Let $\Delta=0$, i.e.
$\left|\tau^{*}{'}\right|=\left|\tau{'}\right|$. Then, from
\eqref{4.8} we have
\[
\tau{'}\left\{\ta(Px)-\ep\ta(x)\right\}=n\left\{\ep\D\tau'(x)-\D\ta'(Px)\right\},\qquad
\ep=\pm 1.
\]
Thus, for $\tau{'}\neq 0$ the following equalities are valid:
\begin{equation}\label{4.11}
\begin{split}
&\ta(Px)-\ta(x)=n\left\{\D\ln\tau{'}(x)-\D\ln\tau{'}(Px)\right\}
\;\; \text{for}\;\;\tau{'}=\tau^{*}{'}\neq 0,\\[4pt]
&\ta(Px)+\ta(x)=-n\left\{\D\ln\tau{'}(x)+\D\ln\tau{'}(Px)\right\}
\;\; \text{for}\;\;\tau{'}=-\tau^{*}{'}\neq 0.
\end{split}
\end{equation}

Hence we establish the truthfulness of the following
\begin{thm}\label{thm-4.1}
Let the curvature tensor $R'$ of a natural connection $\n'$ on a
$\W_1$-manifold $(M,P,g)\notin\overline{\W}_3\cup\overline{\W}_6$
is a Riemannian $P$-tensor with scalar curvatures $\tau{'}$ and
$\tau^{*}{'}$. Then for the 1-forms $\ta$ and  $\ta\circ P$ are
valid equalities \eqref{4.10} for
$\left|\tau^{*}{'}\right|\neq\left|\tau{'}\right|$ and equalities
\eqref{4.11} for
$\left|\tau^{*}{'}\right|=\left|\tau{'}\right|\neq 0$.
\end{thm}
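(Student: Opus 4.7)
The plan is to derive a linear system of two scalar equations in the unknowns $\ta(x)$ and $\ta(Px)$ by starting from the second Bianchi identity for $\n'$, exploiting the $P$-invariance of a Riemannian $P$-tensor via \lemref{lem-3.1}, and then inverting the system explicitly in the two regimes determined by its determinant.

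First I would write the second Bianchi identity \eqref{4.4*} in its (0,4)-version \eqref{4.4}, which is legitimate because $\n' g = 0$ for any natural connection. I would then use formula \eqref{2.9} for the torsion $T$ of $\n'$ to verify the algebraic identity \eqref{4.3}, in which the parameters $\lm,\mu$ parametrizing the family of natural connections cancel and only the coefficient $1/(2n)$ survives; this universality is precisely what makes the conclusion a statement about every natural connection satisfying the hypothesis. Next I would apply the substitution $x \mapsto Px$, $w \mapsto Pw$ inside \eqref{4.4}, exploit the invariance $R'(Px,Py,z,w) = R'(x,y,z,w)$ and the relations \eqref{3.12}--\eqref{3.13} guaranteed by \lemref{lem-3.1}, and subtract from the original. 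Most terms of the cyclic sum collapse, and what remains is the identity \eqref{4.6}.

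The remaining steps are essentially computational. A contraction of \eqref{4.6} by $g^{ij}$ in the $(y,w)$-slots yields a relation on the Ricci tensor $\rho'$; a second contraction in $(z,u)$ produces \eqref{4.8}, and replacing $x$ by $Px$ gives the partner equation \eqref{4.9}. The pair \eqref{4.8}--\eqref{4.9} constitutes a $2\times 2$ linear system in $\ta(x)$ and $\ta(Px)$ with determinant $\Delta = (\tau^{*}{'})^2 - (\tau')^2$. In the regime $|\tau^{*}{'}| \neq |\tau'|$, Cramer's rule solves the system, and the key observation is that the resulting right-hand sides can be repackaged as exterior derivatives of logarithms of $\tau^{*}{'}\pm\tau'$, giving \eqref{4.10}. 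In the degenerate regime $|\tau^{*}{'}| = |\tau'| \neq 0$, the system becomes rank one; writing $\tau^{*}{'} = \ep\tau'$ with $\ep = \pm 1$ and rearranging as logarithmic differentials produces the two formulas in \eqref{4.11}.

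The principal obstacle will be the bookkeeping in the reduction from \eqref{4.4} to \eqref{4.6}: each of the three terms of the cyclic sum must be tracked under the substitution $x \mapsto Px$, $w \mapsto Pw$, and a single sign error or misplaced $P$ would spoil the cancellation that reduces the torsion contribution to the clean form \eqref{4.3}. The hypothesis $(M,P,g)\notin\overline{\W}_3\cup\overline{\W}_6$, which guarantees $\ta(Px)\neq\pm\ta(x)$, does not enter the formal manipulation but justifies treating $\ta(x)$ and $\ta(Px)$ as genuinely independent unknowns, so that the solution formulas \eqref{4.10}--\eqref{4.11} carry nontrivial content rather than collapsing to a tautology.
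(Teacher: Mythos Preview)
Your proposal is correct and follows essentially the same route as the paper: second Bianchi identity \eqref{4.4}, the torsion reduction \eqref{4.3}, the $P$-substitution and subtraction yielding \eqref{4.6}, two successive traces to reach \eqref{4.8}--\eqref{4.9}, and then the case split on the determinant $\Delta=(\tau^{*}{'})^2-(\tau')^2$. Your identification of the bookkeeping hazard in passing from \eqref{4.4} to \eqref{4.6} and of the role of the hypothesis $(M,P,g)\notin\overline{\W}_3\cup\overline{\W}_6$ is also accurate.
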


From \thmref{thm-2.2} and \thmref{thm-4.1} we obtain the following
\begin{thm}\label{thm-4.2}
Let the curvature tensor $R'$ of a natural connection $\n'$ on a
$\W_1$-manifold $(M,P,g)\notin\overline{\W}_3\cup\overline{\W}_6$
is a Riemannian $P$-tensor with scalar curvatures $\tau{'}$ and
$\tau^{*}{'}$. Then
\begin{enumerate}
    \item[i)] For the connection $\n'=D$ determined by $\lm=\mu=0$,
    are valid the properties:\\
    a) If $\left|\tau^{*}{'}\right|\neq\left|\tau{'}\right|$, then
    $\D\ln
    \left|\frac{\tau^{*}{'}+\tau{'}}{\tau^{*}{'}-\tau{'}}\right|\circ
    P$ is a closed 1-form;\\
    b) If $\left|\tau^{*}{'}\right|=\left|\tau{'}\right|\neq 0$, i.e.
    $\tau^{*}{'}=\ep \tau{'}\neq 0$ $(\ep=\pm 1)$, then we have $\D\ta=\ep n
    \D\left(\D\ln\tau'\circ P\right)$. %
    \item[ii)] For the connection $\n'=\widetilde{D}$ determined by $\lm=0$ and
    $\mu=-\frac{1}{2n}$,
    are valid the properties:\\
    a) If $\left|\tau^{*}{'}\right|\neq\left|\tau{'}\right|$, then
    $\D\ln
    \left|\left(\tau^{*}{'}\right)^2-\left(\tau{'}\right)^2\right|\circ
    P$ is a closed 1-form;\\
    b) If $\left|\tau^{*}{'}\right|=\left|\tau{'}\right|\neq 0$, i.e.
    $\tau^{*}{'}=\ep \tau{'}\neq 0$ $(\ep=\pm 1)$, then we have $\D\ta\circ P=\ep n
    \D\left(\D\ln\tau'\circ P\right)$. %
    \item[iii)] For the connections $\n'$ determined by
    $\lm^2-\mu^2-\frac{\mu}{2n}=0$,
    are valid the properties:\\
    a) If $\left|\tau^{*}{'}\right|\neq\left|\tau{'}\right|$, then
    $\D\ln
    \left|\frac{\tau^{*}{'}+\tau{'}}{\tau^{*}{'}-\tau{'}}\right|\circ
    P$ and $\D\ln
    \left|\left(\tau^{*}{'}\right)^2-\left(\tau{'}\right)^2\right|\allowbreak{}\circ
    P$ are closed 1-forms;\\
    b) If $\left|\tau^{*}{'}\right|=\left|\tau{'}\right|\neq 0$,
    then $\D\ln\tau'\circ P$ is a closed 1-form. %
\end{enumerate}
\end{thm}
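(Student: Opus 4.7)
The plan is to combine \thmref{thm-2.2}, which records for each parameter regime the closedness status of the 1-forms $\ta$ and $\ta\circ P$, with \thmref{thm-4.1}, which expresses $\ta$ and $\ta\circ P$ themselves as linear combinations of exterior derivatives of certain logarithms built from the scalar curvatures $\tau{'}$ and $\tau^{*}{'}$. The argument in every case is the same: take $\D$ of the identities \eqref{4.10} (when $|\tau^{*}{'}|\neq|\tau{'}|$) or \eqref{4.11} (when $|\tau^{*}{'}|=|\tau{'}|\neq 0$), exploit that any summand of the form $\D\ln(\cdot)$ is exact and hence closed, and then substitute the closedness information supplied by \thmref{thm-2.2}. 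This immediately isolates the asserted closedness of the appropriate $P$-twisted 1-form or, in sub-cases b), the asserted identity for $\D\ta$ (resp.\ $\D(\ta\circ P)$).

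For case i), $\lm=\mu=0$, \thmref{thm-2.2}(i) gives $\D(\ta\circ P)=0$. I read the second equation of \eqref{4.10} as an identity of 1-forms on $M$ and apply $\D$ to both sides; the contribution from $\D\ln|(\tau^{*}{'})^2-(\tau{'})^2|$ vanishes because that form is exact, so the equation collapses to $\frac{n}{2}\D\bigl(\D\ln|\frac{\tau^{*}{'}+\tau{'}}{\tau^{*}{'}-\tau{'}}|\circ P\bigr)=\D(\ta\circ P)=0$, which is precisely a). For b) I run the same argument on \eqref{4.11}: the $\pm n\D\ln\tau{'}$ summand is exact, so $\D$ of the equation collapses to the stated relation between $\D\ta$ and $\D(\D\ln\tau{'}\circ P)$, with the sign $\ep$ tracking through the two cases of \eqref{4.11}.

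Case ii) is the mirror image: \thmref{thm-2.2}(ii) now supplies $\D\ta=0$; I apply $\D$ to the first equations of \eqref{4.10} and \eqref{4.11}, and the exact summand on the right-hand side (now the one \emph{not} pre-composed with $P$) drops out, yielding the asserted closedness of $\D\ln|(\tau^{*}{'})^2-(\tau{'})^2|\circ P$ in sub-case a) and the asserted formula for $\D(\ta\circ P)$ in sub-case b). For case iii), corresponding to the regime in which \thmref{thm-2.2} forces both $\ta$ and $\ta\circ P$ to be closed, applying $\D$ to \emph{both} equations in \eqref{4.10} (resp.\ \eqref{4.11}) makes the left-hand sides vanish, and since each right-hand side contains exactly one exact summand, the remaining $\D\ln(\cdot)\circ P$ summand in each equation must be closed, yielding the two claims simultaneously. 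The only delicate point in the whole argument is bookkeeping: the exterior derivative does not commute with pre-composition by $P$, so one must carefully keep $\D\ln(\cdot)$ and $\D\ln(\cdot)\circ P$ distinct and match each sub-case of \thmref{thm-2.2} to the corresponding formula in \thmref{thm-4.1}; beyond that no analytic work is needed, since the entire proof reduces to direct substitution combined with $\D^2=0$.
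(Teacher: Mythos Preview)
Your proposal is correct and follows essentially the same route as the paper's own proof: invoke \thmref{thm-2.2} to obtain the closedness of $\ta$ or $\ta\circ P$ in each parameter regime, then apply $\D$ to the relevant identity from \thmref{thm-4.1} (namely \eqref{4.10} or \eqref{4.11}) and use $\D\circ\D=0$ to kill the exact summand, leaving precisely the asserted closedness or the asserted formula for $\D\ta$ (resp.\ $\D(\ta\circ P)$). The paper carries this out in detail only for case~i) and dispatches ii) and iii) with ``in a similar way,'' so your write-up is in fact more explicit than the original.
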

\begin{proof}
Let $\n'$ is the connection $D$ determined by $\lm=\mu=0$. Then,
according to \thmref{thm-2.2}, the 1-form $\ta\circ P$ is closed.
If $\left|\tau^{*}{'}\right|\neq\left|\tau{'}\right|$, then by
virtue of \thmref{thm-4.1} are valid equalities \eqref{4.10}. We
differentiate the second equality of \eqref{4.10} and because of
$\D\ta\circ P=0$ and $\D\circ\D=0$, we obtain $\D\left(\D\ln
    \left|\frac{\tau^{*}{'}+\tau{'}}{\tau^{*}{'}-\tau{'}}\right|\circ
    P\right)=0$, i.e. the property a) from i) is valid.
If $\left|\tau^{*}{'}\right|=\left|\tau{'}\right|\neq 0$, then by
virtue of \thmref{thm-4.1} are valid equalities \eqref{4.11}.
After that by a differentiation we have $\D\ta=\ep
n\D\left(\D\ln\tau'\circ P\right)$, i.e. the property b) from i)
is valid.

In a similar way we establish the properties from ii) and iii).
\end{proof}

%%%%%%%%%%%%%%%%%%%%%%%%%%%%%%%%%%%%%%%%%%%%% 4.2

\subsection{Case $(M,P,g)\in\overline{\W}_3\cup\overline{\W}_6$}

In this case, for the considered $\W_1$-manifold is valid $\ta(x)=
\pm \ta(Px)$.
\begin{thm}\label{thm-4.3}
Let the curvature tensor $R'$ of a natural connection $\n'$ on a
$\W_1$-manifold $(M,P,g)\in\overline{\W}_3\cup\overline{\W}_6$ is
a Riemannian $P$-tensor with scalar curvatures $\tau{'}$ and
$\tau^{*}{'}$. Then
\begin{enumerate}
    \item[i)] For $(M,P,g)\in\overline{\W}_3$ the following properties are valid:
    \\
    a) If $\left|\tau^{*}{'}\right|\neq\left|\tau{'}\right|$, then
    $\D\left(\tau^{*}{'}-\tau{'}\right)\circ
    P$ is a closed 1-form and the 1-form $\ta(x)$ has the form
    \[
    \ta(x)=\frac{n}{2}\left\{\D\ln
    \left|\tau^{*}{'}+\tau{'}\right|(x)
    -\D\ln
    \left|\tau^{*}{'}+\tau{'}\right|(Px)\right\};
    \]
    b) If $\tau^{*}{'}=\tau{'}\neq 0$, then
    the 1-form $\ta(x)$ has the form
    \[
    \ta(x)=\frac{n}{2}\left\{\D\ln
    \left|\tau{'}\right|(x)
    -\D\ln
    \left|\tau{'}\right|(Px)\right\},
    \]
    and if $\tau^{*}{'}=-\tau{'}\neq 0$, then $\D\tau'\circ P$ is a closed
    1-form.
    \item[ii)] For $(M,P,g)\in\overline{\W}_6$ the following properties are valid:
    \\
    a) If $\left|\tau^{*}{'}\right|\neq\left|\tau{'}\right|$, then
    $\D\left(\tau^{*}{'}+\tau{'}\right)\circ
    P$ is a closed 1-form and the 1-form $\ta(x)$ has the form
    \[
    \ta(x)=-\frac{n}{2}\left\{\D\ln
    \left|\tau^{*}{'}-\tau{'}\right|(x)
    +\D\ln
    \left|\tau^{*}{'}-\tau{'}\right|(Px)\right\};
    \]
    b) If $\left|\tau^{*}{'}\right|=\left|\tau{'}\right|\neq 0$, then
    $\D\tau{'}\circ
    P$ is a closed 1-form
    and if $\tau^{*}{'}=-\tau{'}\neq 0$, then 1-form $\ta(x)$ has the form
    \[
    \ta(x)=-\frac{n}{2}\left\{\D\ln
    \left|\tau{'}\right|(x)
    +\D\ln
    \left|\tau{'}\right|(Px)\right\}.
    \]
\end{enumerate}
\end{thm}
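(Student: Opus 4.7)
The plan is to revisit equations \eqref{4.8} and \eqref{4.9}, which hold for any $\W_1$-manifold whose natural connection $\n'$ has curvature tensor a Riemannian $P$-tensor --- their derivation never used the hypothesis $(M,P,g)\notin\overline{\W}_3\cup\overline{\W}_6$. That hypothesis was only invoked to invert the $2\times 2$ linear system in $\ta(x),\ta(Px)$ with determinant $\Delta=(\tau^{*}{'})^2-(\tau{'})^2$. In the present case $\ta\circ P=\pm\ta$, so $\Delta$ need not vanish for the obvious reason; instead, we exploit the degeneracy $\ta\circ P=\pm\ta$ to decouple \eqref{4.8} and \eqref{4.9} into a sum and a difference that bring out the claimed structure.

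For case i), substitute $\ta(Px)=-\ta(x)$ into \eqref{4.8} and \eqref{4.9}. Adding the two resulting relations makes the $\ta$-contributions cancel and gives
\[
\D(\tau{'}-\tau^{*}{'})(x)+\D(\tau{'}-\tau^{*}{'})(Px)=0,
\]
i.e.\ $\D(\tau^{*}{'}-\tau{'})\circ P=-\D(\tau^{*}{'}-\tau{'})$; since the right-hand side is (minus) an exact form, the 1-form $\D(\tau^{*}{'}-\tau{'})\circ P$ is closed. Subtracting the two relations instead doubles the $\ta$-contribution and yields
\[
\D(\tau{'}+\tau^{*}{'})(x)-\D(\tau{'}+\tau^{*}{'})(Px)=\frac{2}{n}\,\ta(x)\bigl(\tau{'}+\tau^{*}{'}\bigr).
\]
When $|\tau^{*}{'}|\neq|\tau{'}|$, division by $\tau{'}+\tau^{*}{'}\neq 0$ converts the right-hand factor into a logarithmic derivative, producing the formula for $\ta$ stated in (i)(a). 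The subcase $\tau^{*}{'}=\tau{'}\neq 0$ is the same formula with $|\tau^{*}{'}+\tau{'}|$ replaced by $|\tau{'}|$; in the subcase $\tau^{*}{'}=-\tau{'}\neq 0$ the difference relation becomes trivial while the sum relation reduces to $\D\tau{'}\circ P=-\D\tau{'}$, yielding the claimed closedness.

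Case ii) is symmetric with the roles of sum and difference swapped. Substituting $\ta(Px)=\ta(x)$ into \eqref{4.8}, \eqref{4.9} and subtracting kills the $\ta$-terms, giving $\D(\tau{'}+\tau^{*}{'})\circ P=\D(\tau{'}+\tau^{*}{'})$ and hence the closedness in (ii)(a); adding produces
\[
\D(\tau{'}-\tau^{*}{'})(x)+\D(\tau{'}-\tau^{*}{'})(Px)=\frac{2}{n}\,\ta(x)\bigl(\tau^{*}{'}-\tau{'}\bigr),
\]
from which, when $\tau^{*}{'}\neq\tau{'}$ (in particular whenever $|\tau^{*}{'}|\neq|\tau{'}|$), we solve for $\ta$ and obtain the expression in (ii)(a); the extra minus sign and the $+$ between the two log-derivatives both come from the sign of $\tau^{*}{'}-\tau{'}$ relative to the case~i) formula. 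The two degenerate subcases $\tau^{*}{'}=\pm\tau{'}\neq 0$ of (ii)(b) follow by the same sum/difference dichotomy as in case i), with the closedness coming from the surviving decoupled equation and the explicit formula coming from the surviving inhomogeneous one.

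The argument is essentially a direct linear-algebraic manipulation of \eqref{4.8}--\eqref{4.9} under the constraint $\ta\circ P=\pm\ta$; the only real pitfall is careful sign-bookkeeping across the four algebraic subcases to match the precise placement of absolute values and the $\pm$ between the two $\D\ln$-terms in each formula.
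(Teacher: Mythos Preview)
Your proof is correct and follows essentially the same approach as the paper. Both arguments substitute the constraint $\ta\circ P=\pm\ta$ into the scalar identities \eqref{4.8}--\eqref{4.9} and then extract the two independent linear combinations: the paper works directly with \eqref{4.9} (obtaining \eqref{4.12}) and its $P$-shifted version and compares them, while you take the sum and difference of \eqref{4.8} and \eqref{4.9}---since \eqref{4.9} is just \eqref{4.8} with $x\mapsto Px$, these are the same two relations organized slightly differently.
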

\begin{proof}
Let $(M,P,g)\in\overline{\W}_3$, i.e. $\ta(Px)=-\ta(x)$. Then,
according to \eqref{4.9}, it is follows
\begin{equation}\label{4.12}
    \ta(x)\left(\tau^{*}{'}+\tau{'}\right)=n\left\{\D\tau^{*}{'}(x)-\D
    \tau{'}(Px)\right\}.
\end{equation}
Hence, for $\left|\tau^{*}{'}\right|\neq\left|\tau{'}\right|$ we
obtain the equalities
\[
\ta(x)=n\frac{\D\tau^{*}{'}(x)-\D
    \tau{'}(Px)}{\tau^{*}{'}+\tau{'}},\qquad
\ta(x)=-n\frac{\D\tau^{*}{'}(Px)-\D
    \tau{'}(x)}{\tau^{*}{'}+\tau{'}},
\]
which imply the property a) from i).

If $\left|\tau^{*}{'}\right|=\left|\tau{'}\right|\neq 0$, then
    \eqref{4.12} implies the property b) from i).

In a similar way, if $(M,P,g)\in\overline{\W}_6$, i.e.
$\ta(Px)=\ta(x)$, then \eqref{4.9} implies the property a) and b)
from ii).
\end{proof}

%%%%%%%%%%%%%%%%%%%%%%%%%%%%%%%%%%%%%%%%%%%%%% 5

\section{Two cases of 4-dimensional $\W_1$-manifolds}

According to \thmref{thm-2.1} and \thmref{thm-3.2}, we obtain the
following
\begin{thm}\label{thm-5.1}
Let the curvature tensor $R'$ of a natural connection $\n'$ on a
4-dimensional $\W_1$-manifold $(M,P,g)$ is a Riemannian $P$-tensor
with scalar curvatures $\tau{'}$ and $\tau^{*}{'}$. Then the
curvature tensor $R$ of the Levi-Civita connection has the
following form
\begin{equation}\label{5.1}
\begin{split}
    R&=\frac{1}{8}\left\{\tau{'}(\pi_1+\pi_2)+\tau^{*}{'}\pi_3\right\}-\psi_1(S')-\psi_2(S'')\\[4pt]
        &-g(p,p)\pi_1-g(q,q)\pi_2-g(p,q)\pi_3.
\end{split}
\end{equation}
\end{thm}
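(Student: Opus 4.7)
The plan is to assemble Theorem~\ref{thm-5.1} from the two tools already at hand, namely \thmref{thm-2.1} and \thmref{thm-3.2}, by substitution.

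First I would apply \thmref{thm-2.1}. The hypotheses of the current theorem match those of \thmref{thm-2.1} word for word: $\n'$ is a natural connection on a $\W_1$-manifold $(M,P,g)$, so the theorem yields the decomposition
\begin{equation*}
    R=R'-g(p,p)\pi_1-g(q,q)\pi_2-g(p,q)\pi_3-\psi_1(S')-\psi_2(S''),
\end{equation*}
with $p$, $q$, $S'$, $S''$ given by \eqref{3.6}, \eqref{3.6'}, \eqref{3.6''}. Note that dimension plays no role at this stage; all terms except $R'$ already appear in the desired formula \eqref{5.1}.

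Second, I would invoke \thmref{thm-3.2} to rewrite $R'$. For this I need to check the (minor) point that $R'$ is actually curvature-like, \ie\ satisfies \eqref{2.4} and \eqref{2.5}. Antisymmetry in the first pair of arguments is automatic for any linear connection, and antisymmetry in the last pair follows from $\n' g=0$, which is part of the definition of a natural connection. The first Bianchi identity \eqref{2.5} is not automatic in the presence of torsion, but it is contained in the very definition of a Riemannian $P$-tensor (such a tensor is curvature-like by assumption), so the hypothesis that $R'$ is a Riemannian $P$-tensor delivers it for free. Since $\dim M=4$, \thmref{thm-3.2} then gives
\begin{equation*}
    R'=\frac{1}{8}\bigl\{\tau'(\pi_1+\pi_2)+\tau^{*}{'}\,\pi_3\bigr\}.
\end{equation*}

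Third, I would substitute this expression for $R'$ into the formula from \thmref{thm-2.1}; the result is exactly \eqref{5.1}. There is no genuine obstacle: the proof is a one-line composition of two earlier results, and the only subtlety worth making explicit is the verification that $R'$ qualifies as a curvature-like tensor so that \thmref{thm-3.2} may be applied. No further computation is required.
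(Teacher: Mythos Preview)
Your proof is correct and follows exactly the same route as the paper, which simply states that the result is obtained ``according to \thmref{thm-2.1} and \thmref{thm-3.2}'' without further detail. Your explicit check that $R'$ is curvature-like (so that \thmref{thm-3.2} applies) is a welcome clarification that the paper omits.
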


Further we consider the cases when $\n'$ is the connection $D$ or
the connection $\widetilde{D}$.

\subsection{Case $\n'=D$}
Let $\n'$ be the natural connection $D$ on a 4-dimensional
$\W_1$-manifold $(M,P,g)$ whose curvature tensor $R'$ is a
Riemannian $P$-tensor. Taking into account that  $\lm=\mu=0$ and
$n=2$, from \eqref{3.6}, \eqref{3.6'} and \eqref{3.6''} we have
\[
g(p,p)=\frac{\ta(\Omega)}{16},\;\; g(q,q)=g(p,q)=0,\;\;
S'(y,z)=\frac{\left(D_y\ta\right)Pz}{4},\;\; S''(y,z)=0.
\]
Then from \eqref{5.1} we obtain the relation
\begin{equation}\label{5.2}
    R=\frac{1}{8}\left\{\tau{'}(\pi_1+\pi_2)+\tau^{*}{'}\pi_3\right\}
    -\frac{\ta(\Omega)}{16}\pi_1-\psi_1(S'),
\end{equation}
which implies immediately
\begin{equation*}\label{5.3}
    \rho=\rho'-\frac{3\ta(\Omega)}{16}g - \tr S' g -2 S',
\end{equation*}
\begin{equation}\label{5.4}
    \tau=\tau'-\frac{\ta(\Omega)}{4} - 6\tr S',\qquad
    \tau^*=\tau^{*}{'}-2\tr \widetilde{S'},
\end{equation}
where $\widetilde{S'}(y,z)=S'(y,Pz)$.

In \cite{8}, it is get the equality
\[
D_yz=\n_yz+\frac{1}{2n}\left\{g(y,z)P\Omega-\ta(Pz)y\right\}.
\]
From the latter equality, we obtain the following
\[
\left(D_y\ta\right)z=\left(\n_y\ta\right)z
+\frac{1}{4}\left\{\ta(y)\ta(Pz)-g(y,z)\ta(P\Omega)\right\}.
\]
Then $S'$ is expressed as follows:
\[
S'(y,z)=\frac{1}{4}\left(\n_y\ta\right)Pz+ \frac{1}{16} \left\{
\ta(y)\ta(z)-g(y,Pz)\ta(P\Omega)\right\}.
\]
Hence, the following equalities follows immediately:
\begin{equation}\label{5.5}
    \tr S'=\frac{\dv P\Omega}{4}+\frac{\ta(\Omega)}{16},\qquad
    \tr \widetilde{S'}=\frac{\dv\Omega}{4}-\frac{3\ta(P\Omega)}{16},
\end{equation}
where $\dv\Omega$ is stand for the divergence of $\Omega$.

The equalities \eqref{5.4} and \eqref{5.5} imply
\begin{equation}\label{5.6}
    \tau'=\tau+\frac{3\dv P\Omega}{2}+\frac{9\ta(\Omega)}{8},\qquad
    \tau^{*}{'}=\tau^*+\frac{\dv\Omega}{2}-\frac{3\ta(P\Omega)}{8}.
\end{equation}

By virtue of \eqref{5.2} and \eqref{5.6} we obtain the
truthfulness of the following
\begin{thm}\label{thm-5.2}
Let $(M,P,g)$ be a 4-dimensional $\W_1$-manifold. If $(M,P,g)$
admit the natural connection $D$ determined by $\lm=\mu=0$ whose
curvature tensor is a Riemannian $P$-tensor, then the curvature
tensor $R$ of the Levi-Civita connection has the following form
\[
\begin{split}
    R&=\frac{1}{32}\left\{4\tau+6\dv P\Omega+3\ta(\Omega)\right\}(\pi_1+\pi_2)\\[4pt]
&+\frac{1}{64}\left\{8\tau^*+4\dv
\Omega-3\ta(P\Omega)\right\}\pi_3
        -\frac{\ta(\Omega)}{16}\pi_1-\psi_1(S').
\end{split}
\]
\end{thm}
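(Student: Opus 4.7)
The plan is a direct substitution: start from the general formula \eqref{5.1} of \thmref{thm-5.1}, specialize it to the connection $D$ (that is, to $\lm=\mu=0$, $n=2$) to obtain \eqref{5.2}, then eliminate $\tau'$ and $\tau^{*}{'}$ from that expression using the identities \eqref{5.6}, which relate them to the Levi-Civita scalar curvatures $\tau,\tau^*$ and to the divergences of $\Omega$ and $P\Omega$.

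First I would plug $\lm=\mu=0$ and $n=2$ into \eqref{3.6}--\eqref{3.6''}. These collapse to $p=\frac{1}{4}P\Omega$, $q=0$, $S''=0$, and $S'(y,z)=\frac{1}{4}\bigl(D_y\ta\bigr)Pz$, from which $g(p,p)=\frac{1}{16}\ta(\Omega)$ and $g(q,q)=g(p,q)=0$; substituting into \eqref{5.1} reproduces \eqref{5.2}.

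Next I would derive \eqref{5.6}. Taking the $g^{ij}$-contraction of \eqref{5.2} twice yields \eqref{5.4}, which expresses $\tau-\tau'$ and $\tau^{*}-\tau^{*}{'}$ in terms of $\tr S'$, $\tr\widetilde{S'}$, and $\ta(\Omega)$. The explicit form of $\tr S'$ and $\tr\widetilde{S'}$ in \eqref{5.5} comes from the relation $D_yz=\n_yz+\frac{1}{2n}\{g(y,z)P\Omega-\ta(Pz)y\}$ recalled from \cite{8}: inserting it into $S'(y,z)=\frac{1}{4}(D_y\ta)Pz$ rewrites $S'$ in terms of $\n\ta$ plus two $\ta\otimes\ta$ point-wise terms, and the contractions are then computed via the identities $g^{ij}\bigl(\n_{e_i}\ta\bigr)e_j=\dv\Omega$ and $g^{ij}\bigl(\n_{e_i}\ta\bigr)Pe_j=\dv P\Omega$. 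Solving \eqref{5.4} for $\tau'$ and $\tau^{*}{'}$ by means of \eqref{5.5} produces \eqref{5.6}.

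The final step is purely algebraic: substitute \eqref{5.6} into the scalar coefficients of $\pi_1+\pi_2$ and $\pi_3$ in \eqref{5.2}, collect like terms, and read off the claimed formula; the summands $-\frac{\ta(\Omega)}{16}\pi_1$ and $-\psi_1(S')$ are carried over unchanged. I do not anticipate any conceptual obstacle; the only point requiring care is the book-keeping of the two independent divergence terms together with the $\frac{1}{2n}=\frac{1}{4}$ factors generated by the difference formula for $D-\n$.
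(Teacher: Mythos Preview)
Your proposal is correct and follows essentially the same route as the paper: specialize \eqref{5.1} to $\lm=\mu=0$, $n=2$ to obtain \eqref{5.2}, compute $\tr S'$ and $\tr\widetilde{S'}$ from the explicit formula $D_yz=\n_yz+\frac{1}{2n}\{g(y,z)P\Omega-\ta(Pz)y\}$ to derive \eqref{5.5} and hence \eqref{5.6}, and then substitute \eqref{5.6} back into \eqref{5.2}. There is no conceptual difference between your outline and the paper's argument.
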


\subsection{Case $\n'=\widetilde{D}$} %
Let $\n'$ be the natural connection $\widetilde{D}$ on a
4-dimensional $\W_1$-manifold $(M,P,g)$ whose curvature tensor
$R'$ is a Riemannian $P$-tensor. Taking into account that $\lm=0$,
$\mu=-\frac{1}{4}$ and $n=2$, from \eqref{3.6}, \eqref{3.6'} and
\eqref{3.6''} we have
\begin{equation}\label{*}
\begin{split}
&g(p,p)=g(p,q)=0, \quad g(q,q)=\frac{1}{16}\ta(\Omega),\\[4pt]
&S'(y,z)=\frac{1}{16}\ta(y)\ta(z),\quad
S''(y,z)=-\frac{1}{4}\left(\widetilde{D}_y\ta\right)Pz-\frac{1}{16}\ta(Py)\ta(Pz).
\end{split}
\end{equation}

Then from \eqref{5.1} we obtain the relation
\begin{equation}\label{5.7}
    R=\frac{1}{8}\left\{\tau{'}(\pi_1+\pi_2)+\tau^{*}{'}\pi_3\right\}
    -\frac{\ta(\Omega)}{16}\pi_2-\psi_1(S')+\psi_2(S''),
\end{equation}
which implies immediately
\begin{equation*}\label{5.8}
    \rho=\rho'+\frac{\ta(\Omega)}{16}g - \tr S' g -2 S'-\tr \widetilde{S''} \widetilde{g}+2S'',
\end{equation*}
\begin{equation}\label{5.9}
    \tau=\tau'+\frac{\ta(\Omega)}{4} - 6\tr S'+2\tr S'',\quad
    \tau^*=\tau^{*}{'}-2\tr \widetilde{S'}-2\tr \widetilde{S''},
\end{equation}
where $\widetilde{g}(y,z)=g(y,Pz)$ and
$\widetilde{S''}(y,z)=S''(y,Pz)$.

According to \cite{9}, we have $\widetilde{D}=\n+Q$, where
$Q(x,y,z)=T(z,y,x)$. Then, using \eqref{2.9}, $\lm=0$,
$\mu=-\frac{1}{4}$ and $n=2$, we get
\[
\widetilde{D}_yz=\n_yz+\frac{1}{4}\left\{\ta(z)Py-g(y,Pz)\ta(\Omega)\right\},
\]
which implies
\[
\left(\widetilde{D}_y\ta\right)z=\left(\n_y\ta\right)z
-\frac{1}{4}\left\{\ta(z)\ta(Py)-g(y,Pz)\ta(\Omega)\right\}.
\]
Then we obtain the following expression of $S''$:
\[
S''(y,z)=-\frac{1}{4}\left(\n_y\ta\right)Pz-\frac{1}{16}g(y,z)\ta(\Omega).
\]
The latter equality and the first equality of \eqref{*} imply
\begin{equation}\label{5.10}
\begin{split}
    &\tr S'=\frac{\ta(\Omega)}{16},\qquad \tr \widetilde{S'}=\frac{\ta(P\Omega)}{16},\\[4pt]
    &\tr S''=-\frac{\dv P\Omega+\ta(\Omega)}{4},\qquad
    \tr \widetilde{S''}=-\frac{\dv \Omega}{4}.
\end{split}
\end{equation}

From equalities \eqref{5.9} and \eqref{5.10} we have
\begin{equation}\label{5.11}
    \tau'=\tau+\frac{\dv P\Omega}{2}+\frac{5\ta(\Omega)}{8},\qquad
    \tau^{*}{'}=\tau^*-\frac{\dv\Omega}{2}+\frac{\ta(P\Omega)}{8}.
\end{equation}

By virtue of \eqref{5.7} and \eqref{5.11} we obtain the
truthfulness of the following
\begin{thm}\label{thm-5.3}
Let $(M,P,g)$ be a 4-dimensional $\W_1$-manifold. If $(M,P,g)$
admit the natural connection $\widetilde{D}$ determined by $\lm=0$
and $\mu=-\frac{1}{4}$ whose curvature tensor is a Riemannian
$P$-tensor, then the curvature tensor $R$ of the Levi-Civita
connection has the following form
\[
\begin{split}
    R&=\frac{1}{64}\left\{8\tau+4\dv P\Omega+5\ta(\Omega)\right\}(\pi_1+\pi_2)\\[4pt]
&+\frac{1}{64}\left\{8\tau^*-4\dv \Omega+\ta(P\Omega)\right\}\pi_3
        -\frac{\ta(\Omega)}{16}\pi_2-\psi_1(S')-\psi_2(S'').
\end{split}
\]
\end{thm}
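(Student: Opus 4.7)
The plan is to specialize the master formula \eqref{5.1} from \thmref{thm-5.1} to the case $\n'=\widetilde D$ and then to trade the primed scalar curvatures $\tau',\tau^{*}{'}$ for the Levi-Civita curvatures $\tau,\tau^{*}$ together with correction terms built from the divergence of $\Omega$, the divergence of $P\Omega$, and the scalars $\ta(\Omega)$, $\ta(P\Omega)$. Once the right pieces are in place the proof is purely computational; it mirrors the argument used for the $D$-case in \thmref{thm-5.2}.

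The first step is to substitute $\lm=0$, $\mu=-\tfrac14$ and $n=2$ into \eqref{3.6}, \eqref{3.6'} and \eqref{3.6''}. This yields $p=0$, $q=-\tfrac14\Omega$, whence $g(p,p)=g(p,q)=0$ and $g(q,q)=\tfrac{1}{16}\ta(\Omega)$, together with the explicit forms of $S'$ and $S''$ collected in \eqref{*}. Plugging these data into \eqref{5.1} produces relation \eqref{5.7}, which expresses $R$ as a linear combination of $\pi_1+\pi_2$, $\pi_3$, $\pi_2$, $\psi_1(S')$ and $\psi_2(S'')$ with coefficients that still involve $\tau'$ and $\tau^{*}{'}$.

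Next I would eliminate those primed curvatures. For this I use the identity $\widetilde D=\n+Q$ with $Q(x,y,z)=T(z,y,x)$ from \cite{9}, together with \eqref{2.9} specialized to our parameters, to obtain an explicit expression for $\widetilde D_yz$ and, in turn, for $(\widetilde D_y\ta)z$ in terms of $(\n_y\ta)z$, $\ta$ and $g$. Substituting back into the formula for $S''$ rewrites it purely in terms of $(\n_y\ta)Pz$, $g$ and $\ta$. Contracting the resulting expressions for $S'$ and $S''$ with $g^{ij}$ and with $g^{ij}P_{j}{}^{\cdot}$ produces the four trace identities \eqref{5.10}. Taking the Ricci and the associated Ricci traces of the intermediate relation \eqref{5.7} and combining them with \eqref{5.10} yields the scalar-curvature equations \eqref{5.11}, which are the key link between the two sets of scalars.

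The final step is to substitute \eqref{5.11} into the $\tfrac{\tau'}{8}$ and $\tfrac{\tau^{*}{'}}{8}$ coefficients of \eqref{5.7}; collecting over a common denominator $64$ collapses them into $\tfrac{1}{64}\bigl\{8\tau+4\dv P\Omega+5\ta(\Omega)\bigr\}$ and $\tfrac{1}{64}\bigl\{8\tau^{*}-4\dv\Omega+\ta(P\Omega)\bigr\}$, which is exactly the asserted form. The real obstacle is not conceptual but organizational: one must be careful about signs and $P$-twists when expanding $(\widetilde D_y\ta)Pz$ and when taking each of the four $g$- and $\widetilde g$-traces, because the $\mu=-\tfrac14$ coefficient mixes contributions of opposite sign compared with the $D$-case and a single misplaced $P$ propagates through both \eqref{5.10} and \eqref{5.11}. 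With the bookkeeping done correctly, the final assembly is automatic.
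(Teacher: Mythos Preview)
Your proposal is correct and follows essentially the same route as the paper: specialize \eqref{5.1} via \eqref{3.6}--\eqref{3.6''} to obtain \eqref{*} and \eqref{5.7}, pass from $\widetilde D$ to $\n$ using $\widetilde D=\n+Q$ with $Q(x,y,z)=T(z,y,x)$ to rewrite $S''$, compute the four traces \eqref{5.10}, derive \eqref{5.11} from the scalar contractions of \eqref{5.7}, and substitute back. The paper carries out exactly these steps in the same order, so your outline matches it almost verbatim.
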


%%%%%%%%%%%%%%%%%%%%%%%%%%%%%%%%%%%%%%%%%%%%%%

% ------------------------------------------------------------------------

\begin{thebibliography}{1}

\bibitem{3}
A. Gray, L. Hervella, \emph{The sixteen classes of almost
Hermitian manifolds and their linear invariants.} Ann. Mat. Pura
Appl. \textbf{123} (1980), 35--58.


\bibitem{9}
D. Gribacheva, \emph{Natural connections on Riemannian product
manifolds.} Compt. rend. Acad. bulg. Sci. \textbf{64} (2011), no.
6, 799--806.

\bibitem{8}
D.  Gribacheva, \emph{A natural connection on a basic class of
Riemannian product manifolds.} Int. J. Geom. Methods Mod. Phys.
(In Press), arXiv:1103.2916.

\bibitem{14}
D. Gribacheva, D. Mekerov, \emph{Natural connections on conformal
Riemannian $P$-manifolds.} Compt. rend. Acad. bulg. Sci. (In
Press), arXiv:1109.2704


\bibitem{16}
S. Kobayshi, K. Nomizu, \emph{Foundations of differential
geometry.} \textbf{1}, Intersc. Publ., New York, 1963.

\bibitem{ManIv40}
M. Manev, M. Ivanova, \emph{Almost contact B-metric manifolds with
curvature tensors of K\"ahler type.} Plovdiv Univ. Sci. Works -
Math. (In Press), arXiv:1203.\allowbreak{}3290.

\bibitem{7}
D. Mekerov. \emph{On Riemannian almost product manifolds with
nonintegrable structure.} J. Geom. \textbf{89} (2008), no. 1-2,
119--129.

\bibitem{5}
V. Mihova, \emph{Canonical connections and the canonical conformal
group on a Riemannian almost product manifold.} Serdica Math. P.,
\textbf{15} (1989), 351--358.

\bibitem{2}
A. M. Naveira, \emph{A classification of Riemannian almost product
manifolds.} Rend. Math. \textbf{3} (1983), 577--592.



\bibitem{6}
M. Staikova, \emph{Curvature properties of Riemannian
$P$-manifolds.} Plovdiv Univ. Sci. Works - Math. \textbf{32}
(1987), no. 3, 241--251.


\bibitem{4}
M. Staikova, K. Gribachev, \emph{Canonical connections and their
conformal invariants on Riemannian $P$-manifolds.} Serdica Math.
P. \textbf{18} (1992), 150--161.


\bibitem{15}
M. Staikova, K. Gribachev, D. Mekerov, \emph{Riemannian
$P$-manifolds of constant sectional curvatures.} Serdica Math. J.
\textbf{17} (1991), 212--219.



\bibitem{1}
K. Yano, \emph{Differential geometry on complex and almost complex
spaces.} Pure and Applied Math. \textbf{49}, New York, Pergamon
Press Book, 1965.



\end{thebibliography}
\end{document}